\newtheorem{theorem}{Theorem}
\theoremstyle{plain}
\newtheorem{lemma}{Lemma}
\newtheorem{remark}{Remark}
\renewcommand\bigskip{\medskip}
\begin{document}
\title[]{On a conjecture of J. Shallit about Ap\'ery-like numbers}
\author{Zhao SHEN}
\date{\today }
\keywords{Ap\'ery-like number, combinatorial identity, regular sequence}
\markboth{}{}
\maketitle

\begin{abstract}
Put $a(n)=\sum\limits_{k=0}^{n}\binom{n}{k}\binom{n+k}{k}$, and $b(n)=v_{3}(a(n))$, for all integers $n\geqslant 0$, where $v_{3}$ is the $3$-adic valuation. In this work, we shall confirm a formula about $b(n)$, conjectured by J. Shallit in 2000. As application, we show that the sequence $(b(n))_{n\geqslant 0}$ is $3$-regular.
\end{abstract}

\subsection{Introduction}
Define $A(n)=\sum\limits_{k=0}^{n}\binom{n}{k}^{2}\binom{n+k}{k}$ and $B(n)=\sum\limits_{k=0}^{n}\binom{n}{k}^{2}\binom{n+k}{k}^{2}$, for all integers $n\geqslant 0$. These quantities, called
Ap\'ery numbers, have been introduced by R. Ap\'ery \cite{ap1} to show the  irrationality of $\zeta(2)$ and $\zeta(3)$.

For all integers $n\geqslant 0$, put $P_n(x)=\frac{1}{2^n(n!)}\frac{\mathrm{d}^n}{\mathrm{d}x^n}(x^2-1)^n$ the $n$-th Legendre polynomial,
and consider the Ap\'ery-like numbers (see for example \cite{ba,ca})
$$
a(n)=P_n(3)=\sum\limits_{k=0}^{n}\binom{n}{k}\binom{n+k}{k}.
$$
which has been used by  K. Alladi and M. L. Robinson \cite{ar} to show the irrationality of $\log 2$.
For more about Ap\'ery-like numbers, see for example \cite{co,os,su} and the references therein.

In this work, we shall show the following result, conjectured in 2000 by J. Shallit in his talk \cite[p.~18]{sh1} (see also \cite[p.~453]{as3}),
who was inspired by Advanced problem {\bf 6625} put forward in 1990 by N. Strauss and J. Shallit \cite{st1} (solved by D. Zagier \cite{st2}),
and based on computer experiments.

\begin{theorem}\label{thm1}
For all integers $n\geqslant 0$, put $b(n)=v_{3}(a(n))$. Then
\begin{equation}\label{eq1}
b(n)=\left\{\begin{array}{lcl}
&b\big(\lfloor\frac{n}{3}\rfloor\big)+ \big(\lfloor\frac{n}{3}\rfloor \bmod 2\big), &\text{if } n\equiv 0,2 \ (\bmod\, 3),\\
&b\big(\lfloor\frac{n}{9}\rfloor\big)+1, &\text{if } n\equiv 1 \ (\bmod\, 3).
\end{array}\right.
\end{equation}
where $v_{3}$ is the $3$-adic valuation, i.e., $v_{3}(0)=+\infty$, and for all integers $k\geqslant 1$, $v_{3}(k)$ is the greatest integer $d\geqslant 0$ such that $3^d$ divides $k$.
\end{theorem}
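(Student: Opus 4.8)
I would prove \eqref{eq1} by strong induction on $n$, the inductive engine being ``descent'' congruences relating $a(3m+r)$, $r\in\{0,1,2\}$, to $a(m)$ or $a(\lfloor m/3\rfloor)$ to a controlled $3$-adic precision. Two inputs are used throughout: the formula $a(m)=\sum_{k}\binom{m}{k}^{2}2^{k}$ (equivalent to the stated one), and the Legendre three-term recurrence $(m+1)a(m+1)=3(2m+1)a(m)-m\,a(m-1)$.

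\textbf{Deriving the descent relations.} Writing $a(3m+r)=\sum_{k}\binom{3m+r}{k}^{2}2^{k}$ and grouping $k$ by $k\bmod 3$, the blocks $\binom{3m+r}{3j+s}$ are governed by Lucas' theorem and its Jacobsthal/Kazandzidis-type refinements. For $r=0$ the $s=1,2$ blocks consist of multiples of $3$, so their squares vanish mod $9$, and the $s=0$ block collapses via $\binom{3m}{3j}\equiv\binom{m}{j}\pmod 9$ and $2^{3j}\equiv(-1)^{j}\pmod 9$ to the clean relation
\[
a(3m)\ \equiv\ \sum_{k}(-1)^{k}\binom{m}{k}^{2}\pmod 9 ,
\]
with a parallel statement for $r=2$; for $r=1$ one has $a(3m+1)\equiv 0\pmod 3$ with $a(3m+1)/3$ a binomial sum that I would push one further level --- natural since $3m+1=9\lfloor m/3\rfloor+j$, $j\in\{1,4,7\}$ --- yielding $a(3m+1)\equiv 3\,(\text{unit})\,a(\lfloor m/3\rfloor)$ to the working precision, which is why $\lfloor n/9\rfloor$ is the correct reference index in \eqref{eq1}. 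An exact alternative to Lucas is available through the recurrence: composing $2m$ of its steps gives a polynomial $2\times 2$ ``transfer'' identity expressing $(a(3m+1),a(3m))$ in terms of $(a(m+1),a(m))$, whose entries' $3$-adic valuations one then analyzes directly.

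\textbf{The valuation induction.} For $n\equiv1\pmod 3$ the two-step relation gives $b(n)=1+b(\lfloor n/9\rfloor)$ at once. For $n=3m$ (and $n=3m+2$, which is parallel) the decisive fact is the classical identity $\sum_{k}(-1)^{k}\binom{m}{k}^{2}=0$ for $m$ odd and $(-1)^{m/2}\binom{m}{m/2}$ for $m$ even, together with the companion statement $v_{3}\binom{m}{m/2}=b(m)$ for even $m$ (which I would establish alongside the induction). Hence for even $m$ the leading $3$-adic layer of $a(3m)$ is already nonzero, of valuation $b(m)$, so $b(3m)=b(m)$; for odd $m$ that layer cancels, the valuation jumps, and the next layer shows it rises by exactly $1$, giving $b(3m)=b(m)+1$. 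This cancellation, keyed to the parity of $m$, is precisely the origin of the term $\lfloor n/3\rfloor\bmod 2$ in \eqref{eq1}. A bounded number of base cases closes the induction.

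\textbf{Main obstacle.} The real difficulty is precision together with an apparent circularity: the mod-$9$ relation settles $b(3m)$ only when $b(m)\le 1$, whereas in general one needs the descent relations modulo $3^{b(m)+2}$ --- and a bound on $b(m)$ is part of what is being proved. I would break the circle by first proving, independently, the crude bound $b(m)\le 1+\log_{3}(m+1)$ (a short induction on the recurrence), so that a uniform precision $3^{N}$ with $N=O(\log m)$ suffices; the higher-precision analogues of the displayed congruence acquire extra terms from the $s\ne0$ blocks and from the discrepancies $\binom{3m}{3j}-\binom{m}{j}$, and marshalling these --- or, equivalently, controlling the valuations of the transfer-matrix entries --- is the technical heart of the argument. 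The algebraicity of $(1-6t+t^{2})^{-1/2}$, which makes $(a(n)\bmod 3^{k})_{n}$ $3$-automatic for every $k$, provides an effective back-up for the existence of such relations. Once \eqref{eq1} is proved, the $3$-regularity claimed in the abstract is formal: \eqref{eq1} exhibits the $3$-kernel of $b$ inside the $\mathbb{Z}$-module spanned by $b$, the constant $1$, and the finitely many $3$-automatic sequences $n\mapsto(\lfloor n/3^{\,j}\rfloor\bmod 2)$, hence that module is finitely generated.
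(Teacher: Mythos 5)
Your outline correctly identifies the two phenomena that drive \eqref{eq1} --- the alternating-sum identity $\sum_k(-1)^k\binom{m}{k}^2$ vanishing exactly when $m$ is odd (the source of the $\lfloor n/3\rfloor\bmod 2$ term), and the need to descend two levels when $n\equiv 1\pmod 3$ --- and your mod-$9$ congruence for $a(3m)$ checks out numerically. But the proposal has a genuine gap, which your own ``Main obstacle'' paragraph concedes: to conclude $b(3m)=b(m)$ for even $m$ you need the descent congruence to precision $3^{b(m)+2}$, and $b(m)$ is unbounded. The tools you invoke only deliver bounded precision. Concretely, $\binom{3m}{3j}\equiv\binom{m}{j}\pmod 9$ already fails modulo $27$ (e.g.\ $\binom{6}{3}-\binom{2}{1}=18$), and the $s\neq 0$ blocks, whose squares vanish mod $9$, reappear mod $27$; so each further layer of precision introduces new correction terms, and controlling all of them up to level $3^{b(m)+2}$ is not a routine refinement but the entire difficulty. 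The transfer-matrix alternative fares no better: composing $2m$ steps of the Legendre recurrence produces specific rational coefficients (not a polynomial identity), it entangles $a(m+1)$ with $a(m)$, and estimating the $3$-adic valuations of those coefficients is essentially as hard as the original problem. Your auxiliary claim $v_3\binom{m}{m/2}=b(m)$ for even $m$ is true but is itself a substantial piece of the theorem, not something that falls out of the induction for free.

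The paper sidesteps the precision issue entirely. It uses Barnes' identity $a(n)=\sum_{k=0}^{\lfloor n/2\rfloor}\frac{n!}{(k!)^2(n-2k)!}2^k3^{n-2k}$, in which the $3$-adic valuation of every single term is exactly computable from the base-$3$ digits of $n$ and $k$ via Legendre's formula. One then shows that the term $k=\lfloor n/2\rfloor$ has valuation $(n\bmod 2)+\sum_{i=1}^r(-1)^{r-i}s_i$ (where $s_1<\cdots<s_r$ are the positions of the digit $1$ in $n$) and that every other term has strictly larger valuation; this gives a closed digit formula for $b(n)$ from which \eqref{eq1} is an easy consequence. If you want to salvage your approach, the realistic fix is to import that exact expansion (or an equivalent exact identity) rather than congruences: an argument built on Lucas/Jacobsthal-type reductions cannot reach the required, unbounded, level of $3$-adic precision.
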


\subsection{A closed formula for $b(n)$}
In this section, we shall present a closed formula for $b(n)$ via the $3$-ary expansion of $n$.
We begin with a lemma, which is a special case of a formula obtained by E. W. Barnes \cite[p.~120]{ba} in 1908 (see also \cite[p.~268]{ra}).

\begin{lemma}\label{lem1} For all integers $n\geqslant 0$, we have
\begin{eqnarray}\label{eq6}
a(n)=\sum\limits_{k=0}^{\lfloor\frac{n}{2}\rfloor}\frac{n!}{(k!)^2(n-2k)!}2^{k}3^{n-2k}.
\end{eqnarray}
\end{lemma}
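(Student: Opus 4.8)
The plan is to prove \eqref{eq6} by comparing ordinary generating functions. Recall the classical generating function for Legendre polynomials, $\sum_{n\geqslant 0}P_n(x)t^n=(1-2xt+t^2)^{-1/2}$; since $a(n)=P_n(3)$, this specialises to $\sum_{n\geqslant 0}a(n)t^n=(1-6t+t^2)^{-1/2}$. Writing $c(n)$ for the right-hand side of \eqref{eq6}, it then suffices to check that $\sum_{n\geqslant 0}c(n)t^n$ equals the same function, as an identity of formal power series in $t$ (equivalently, as an identity of analytic functions near $t=0$).

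The key algebraic observation is the factorisation $1-6t+t^2=(1-3t)^2-8t^2$. Applying the binomial series $(1-u)^{-1/2}=\sum_{k\geqslant 0}\binom{2k}{k}4^{-k}u^k$ with $u=8t^2/(1-3t)^2$ gives
\[
\frac{1}{\sqrt{1-6t+t^2}}=\frac{1}{1-3t}\sum_{k\geqslant 0}\binom{2k}{k}\frac{2^kt^{2k}}{(1-3t)^{2k}}=\sum_{k\geqslant 0}\binom{2k}{k}2^kt^{2k}\,(1-3t)^{-(2k+1)},
\]
where each summand is a power series in $t$ of order $2k$, so the rearrangement is harmless. Expanding $(1-3t)^{-(2k+1)}=\sum_{m\geqslant 0}\binom{m+2k}{2k}3^mt^m$ and collecting the coefficient of $t^n$ — only the pairs with $m+2k=n$ contribute, so the sum over $k$ is finite — one obtains
\[
[t^n]\,(1-6t+t^2)^{-1/2}=\sum_{0\leqslant k\leqslant n/2}\binom{2k}{k}\binom{n}{2k}2^k3^{n-2k}.
\]
Since $\binom{2k}{k}\binom{n}{2k}=\frac{(2k)!}{(k!)^2}\cdot\frac{n!}{(2k)!\,(n-2k)!}=\frac{n!}{(k!)^2(n-2k)!}$, the right-hand side is exactly $c(n)$, which proves \eqref{eq6}.

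I do not anticipate a genuine obstacle: everything reduces to standard formal-power-series manipulations, and the only points deserving a little care are spotting the factorisation $(1-3t)^2-8t^2$ and justifying the interchange of the two summations (legitimate because for each fixed $n$ only finitely many indices $(k,m)$ occur). As an alternative one could instead show that $c(n)$ satisfies the three-term recurrence $(n+1)a(n+1)=3(2n+1)a(n)-na(n-1)$ inherited from the Legendre recurrence, together with the initial values $a(0)=a(1)\cdot\tfrac13\cdot 3$ fixed by inspection; but that route requires a messier Zeilberger-type binomial identity, so the generating-function argument is preferable.
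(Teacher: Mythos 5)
Your argument is correct, and it is a genuinely different route from the paper's. You work with the generating function in $t$: starting from $\sum_n P_n(3)t^n=(1-6t+t^2)^{-1/2}$, the factorisation $1-6t+t^2=(1-3t)^2-8t^2$ and the central-binomial series give $\sum_k\binom{2k}{k}2^kt^{2k}(1-3t)^{-(2k+1)}$, and extracting $[t^n]$ yields $\sum_k\binom{2k}{k}\binom{n}{2k}2^k3^{n-2k}$, which is the claimed sum; all the formal-power-series manipulations you invoke are legitimate for the reasons you state. The paper instead works coefficient-by-coefficient for each fixed $n$: a Vandermonde-type rearrangement shows $a(n)=\sum_i\binom{n}{i}\binom{n}{n-i}2^{n-i}=[x^n](1+x)^n(2+x)^n$, and then expanding $(2+3x+x^2)^n$ by the trinomial/binomial theorem gives the same answer. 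The trade-off: the paper's proof is entirely self-contained and elementary, using nothing beyond binomial identities, whereas yours leans on two classical external facts — the identity $\sum_k\binom{n}{k}\binom{n+k}{k}=P_n(3)$ (which the paper only asserts with references in the introduction, so strictly speaking your proof of the lemma as stated for the binomial sum inherits that dependence) and the Legendre generating function. In exchange, your computation is shorter once those facts are granted, and the factorisation $(1-3t)^2-8t^2$ is a nice structural explanation of where the $2^k3^{n-2k}$ weights come from. Note that both arguments are really the same identity viewed from two sides: your $[t^n]$ extraction from $(1-3t)^{-(2k+1)}$ plays exactly the role of the paper's $[x^n]$ extraction from $(2+x^2)^i(3x)^{n-i}$.
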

\begin{proof} Indeed, for all integers $n\geqslant 0$, E. W. Barnes has shown the following formula
$$
P_n(x)=\sum_{k=0}^{\lfloor\frac{n}{2}\rfloor}(-1)^k\frac{n!}{2^{2k}(k!)^2(n-2k)!}(1-x^2)^kx^{n-2k},
$$
from which, by putting $x=3$, we deduce at once the desired result.

Since it is difficult to obtain \cite{ba}, we give below a direct proof of the formula (\ref{eq6}).

Note that for all integers $k\geqslant 0$, we have $(1+x)^{n+k}=(1+x)^{k}(1+x)^{n}$. By comparing  the coefficient of $x^{k}$ in both sides, we obtain
\begin{eqnarray*}
\binom{n+k}{k}=\sum\limits_{i=0}^{k}\binom{k}{k-i}\binom{n}{i},
\end{eqnarray*}
from which we deduce directly
\begin{eqnarray*}
a(n)&=&\sum\limits_{k=0}^{n}\binom{n}{k}\binom{n+k}{k}=\sum\limits_{k=0}^{n}\sum\limits_{i=0}^{k}\binom{n}{k}\binom{k}{k-i}\binom{n}{i}\\
&=&\sum\limits_{i=0}^{n}\sum\limits_{k=i}^{n}\binom{n}{k}\binom{k}{k-i}\binom{n}{i}
=\sum\limits_{i=0}^{n}\binom{n}{i}\sum\limits_{k=i}^{n}\frac{n!}{(n-k)!k!}\cdot \frac{k!}{(k-i)!i!}\\
&=&\sum\limits_{i=0}^{n}\binom{n}{i}\sum\limits_{k=i}^{n}\frac{n!}{(n-i)!i!}\cdot \frac{(n-i)!}{(k-i)!(n-k)!}\\
&=&\sum\limits_{i=0}^{n}\binom{n}{i}\sum\limits_{k=i}^{n}\binom{n}{i}\binom{n-i}{k-i}
=\sum\limits_{i=0}^{n}\binom{n}{i}\binom{n}{n-i}\sum\limits_{j=0}^{n-i}\binom{n-i}{j}\\
&=&\sum\limits_{i=0}^{n}\binom{n}{i}\binom{n}{n-i}2^{n-i}.
\end{eqnarray*}
Thus $a(n)$ equals the coefficient of $x^{n}$ in $(1+x)^{n}(2+x)^{n}$.
On the other hand,
\begin{eqnarray*}
(1+x)^{n}(2+x)^{n}=(2+x^{2}+3x)^{n}=\sum\limits_{i=0}^{n}\binom{n}{i}(2+x^2)^{i}(3x)^{n-i}.
\end{eqnarray*}
The coefficient of $x^{n}$ in $(2+x^2)^{i}(3x)^{n-i}$ is $0$ if $2\nmid i$, or $\binom{i}{i/2}2^{i/2}3^{n-i}$ if $2\mid i$. So
\begin{eqnarray*}
a(n)=\sum\limits_{k=0}^{\lfloor\frac{n}{2}\rfloor}\binom{n}{2k}\binom{2k}{k}2^{k}3^{n-2k}
=\sum\limits_{k=0}^{\lfloor\frac{n}{2}\rfloor}\frac{n!}{(k!)^2(n-2k)!}2^{k}3^{n-2k}.
\end{eqnarray*}
Hence the desired result holds.
\end{proof}

\begin{theorem}\label{thm2}
For  all integers $n\geqslant 1$, write $n=\sum\limits_{j=0}^mn(j)3^j$, with $m=\lfloor \frac{\log n}{\log 3}\rfloor$, and  $n(j)\in \{0,1,2\}\ (0\leqslant j\leqslant m)$.
Let $s_{1}<s_{2}<\cdots <s_{r}$ be all the indices $j$ (if there exist) such that $n(j)=1$. Then
\begin{eqnarray}\label{eq2}
b(n)=(n \bmod 2)+\sum\limits_{i=1}^{r}(-1)^{r-i}s_{i}=\sum\limits_{i=1}^{r}(-1)^{r-i}(s_{i}+1),
\end{eqnarray}
where by convention, the sums equal $0$ if $r=0$, i.e., $n(j)\neq 1\ (0\leqslant j\leqslant m)$.
\end{theorem}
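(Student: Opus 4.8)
The strategy is to single out, among the $\lfloor n/2\rfloor+1$ terms of Barnes' formula (\ref{eq6}), the unique one of least $3$-adic valuation. Put $M=\lfloor n/2\rfloor$, and for $0\le k\le M$ write
\[
T_k=\binom{n}{2k}\binom{2k}{k}2^{k}3^{\,n-2k},
\]
so that $a(n)=\sum_{k=0}^{M}T_k$ and, since $2$ is a $3$-adic unit, $v_3(T_k)=v_3\binom{n}{2k}+v_3\binom{2k}{k}+(n-2k)$. I would establish two claims: (a) $v_3(T_k)>v_3(T_M)$ for every $0\le k<M$; and (b) $v_3(T_M)$ equals the right-hand side of (\ref{eq2}). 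Since a finite sum of rationals whose $3$-adic valuations are pairwise distinct has valuation equal to the smallest of them, (a) and (b) force $v_3(a(n))=v_3(T_M)$, which by (b) is the right-hand side of (\ref{eq2}). The remaining equality in (\ref{eq2}) then comes for free: $\sum_{i=1}^{r}(-1)^{r-i}=(r\bmod 2)$, while $n\equiv\sum_j n(j)\equiv r\pmod 2$, as the base-$3$ digits $0,1,2$ reduce mod $2$ to $0,1,0$.

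For (a), fix $k<M$ and set $\ell=M-k\ge 1$; writing $n=2m$ or $n=2m+1$ one has $1\le\ell\le m$ and $k=m-\ell$. A short computation with factorials — comparing $\binom{n}{2k}\binom{2k}{k}$ with $T_M=\binom{2m}{m}2^{m}$ (resp.\ $T_M=(2m+1)\binom{2m}{m}2^{m}3$) — gives the ratio formulas
\[
\frac{T_k}{T_M}=\frac{\binom{m}{\ell}^{2}}{\binom{2\ell}{\ell}}\cdot\frac{3^{2\ell}}{2^{\ell}}\quad(n=2m),\qquad
\frac{T_k}{T_M}=\frac{\binom{m}{\ell}^{2}}{(2\ell+1)\binom{2\ell}{\ell}}\cdot\frac{3^{2\ell}}{2^{\ell}}\quad(n=2m+1).
\]
Applying $v_3$ and discarding the nonnegative term $2v_3\binom{m}{\ell}$, it is enough to check $v_3\binom{2\ell}{\ell}<2\ell$ in the even case and $v_3\binom{2\ell}{\ell}+v_3(2\ell+1)<2\ell$ in the odd case. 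Both follow from Legendre's formula: $v_3\binom{2\ell}{\ell}+v_3(2\ell+1)=v_3\!\big((2\ell+1)!/(\ell!)^{2}\big)=\sum_{i\ge 1}\big(\lfloor(2\ell+1)/3^{i}\rfloor-2\lfloor\ell/3^{i}\rfloor\big)$, in which each summand lies in $\{0,1\}$ and vanishes once $3^{i}>2\ell+1$, so the total is at most $\lfloor\log_3(2\ell+1)\rfloor<2\ell$ for $\ell\ge 1$ (the even estimate is the same with $2\ell$ in place of $2\ell+1$). Hence $v_3(T_k)>v_3(T_M)$.

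For (b), I would invoke Kummer's theorem: $v_3\binom{2m}{m}$ is the number of carries produced in the base-$3$ addition $m+m$. A digit-by-digit inspection of the carry sequence $c_0=0,c_1,c_2,\dots$ shows that a digit $n(j)=1$ of $n=2m$ flips the carry ($c_{j+1}=1-c_j$) while a digit $n(j)\in\{0,2\}$ leaves it unchanged; hence $c_{j+1}$ is the parity of $\#\{0\le i\le j:\ n(i)=1\}$. Summing over $j$ — and using that $r$ is even when $n$ is even — one gets $v_3\binom{2m}{m}=(s_2-s_1)+(s_4-s_3)+\dots+(s_r-s_{r-1})=\sum_{i=1}^r(-1)^{r-i}s_i$, which is (\ref{eq2}) since $n\bmod 2=0$. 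For odd $n$, $v_3(T_M)=1+v_3(n)+v_3\binom{n-1}{(n-1)/2}$; applying the identity just obtained to the even number $n-1$ and following how subtracting $1$ modifies the list of $1$-digit positions — according as the lowest nonzero digit $n(v_3(n))$ equals $1$ (so $s_1=v_3(n)$ is removed) or $2$ (so $v_3(n)$ is inserted just below $s_1$) — one obtains $v_3\binom{n-1}{(n-1)/2}=\sum_{i=1}^r(-1)^{r-i}s_i-v_3(n)$ in either subcase, whence $v_3(T_M)=1+\sum_{i=1}^r(-1)^{r-i}s_i=(n\bmod 2)+\sum_{i=1}^r(-1)^{r-i}s_i$.

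The step I expect to be the main obstacle is (b): the carry analysis for $m+m$, and especially the digit-level case analysis for the passage from $n$ to $n-1$ when $n$ is odd, call for careful bookkeeping with base-$3$ expansions. Step (a), once the two ratio formulas are written down, collapses to the trivial inequality $\log_3(2\ell+1)<2\ell$ valid for all $\ell\ge 1$.
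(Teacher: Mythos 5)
Your proof is correct. It shares the paper's skeleton---both start from Barnes' formula (\ref{eq6}) and reduce the theorem to the two claims that the term $k=\lfloor n/2\rfloor$ has $3$-adic valuation equal to the right-hand side of (\ref{eq2}) and that every other term has strictly larger valuation (the paper's (\ref{eq3}) and (\ref{eq4}))---but you establish both claims by genuinely different means. For the strict inequality, the paper computes the base-$3$ digits $e(i)$ of $\lfloor n/2\rfloor$ explicitly and runs a two-tier case analysis on the parity of $r$ and on which block $[s_{j},s_{j+1})$ contains the top index $t$ where $k$ and $\lfloor n/2\rfloor$ disagree; your ratio identities $T_k/T_M=\binom{m}{\ell}^2 3^{2\ell}\big/\big(2^{\ell}\binom{2\ell}{\ell}\big)$ and $T_k/T_M=\binom{m}{\ell}^2 3^{2\ell}\big/\big(2^{\ell}(2\ell+1)\binom{2\ell}{\ell}\big)$ (both of which I checked) collapse all of this to the one-line bound $v_3\binom{2\ell}{\ell}+v_3(2\ell+1)\leqslant\lfloor\log_3(2\ell+1)\rfloor<2\ell$, which is shorter and far less delicate. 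For the exact value of $v_3(T_M)$, the paper again works with the digits of $\lfloor n/2\rfloor$ and sums the quantities $\alpha_i$, whereas you invoke Kummer's theorem for the carries in $m+m$ (the carry does flip exactly at the digits $n(j)=1$ and is otherwise preserved---the six digit cases confirm this) and treat odd $n$ by passing to $n-1$ and tracking how the borrow alters the list $s_1<\cdots<s_r$; both subcases of that bookkeeping check out, since the borrowed positions below $v_3(n)$ all become $2$'s and so never contribute new $1$-digits. Your route buys brevity and robustness at the cost of importing Kummer's theorem; the paper's is self-contained but much more intricate. One cosmetic point: the ultrametric fact you actually need is that a finite sum in which one term has strictly smaller valuation than all the others has that valuation---your claims give a unique minimizer, not pairwise distinct valuations---but that is precisely the standard statement, so nothing is lost.
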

\begin{remark} If $r=0$, then $n(j)\in \{0,2\}$ for $0\leqslant j\leqslant m$, thus $n\equiv 0\equiv r\, (\bmod\, 2)$.

If $r\geqslant 1$, then we have
\begin{eqnarray*}
n=\sum_{j=0}^mn(j)3^j\equiv \sum_{i=1}^r3^{s_i}\equiv r\, (\bmod\, 2), \textrm{and}\ \sum_{i=1}^r(-1)^{r-i}= (r \bmod 2).
\end{eqnarray*}
So in all cases, we have $n\equiv r\, (\bmod\, 2)$, thus the second equality in the formula {\rm (\ref{eq2})} comes from the first one.
\end{remark}
\begin{proof}
For all integers $k\ (0\leqslant k\leqslant \lfloor\frac{n}{2}\rfloor)$, put $d(n,k)=v_{3}(\frac{n!}{(n-2k)!k!k!}2^{k}3^{n-2k})$. Then
\begin{eqnarray}\label{eqn4}
d(n,k)=n-2k+\sum\limits_{i=1}^{m}\big(\lfloor\frac{n}{3^{i}}\rfloor-\lfloor\frac{n-2k}{3^{i}}\rfloor-2\lfloor\frac{k}{3^{i}}\rfloor\big).
\end{eqnarray}
Note here that every term in the summation is nonnegative.
By {\bf Lemma \ref{lem1}},  to conclude {\bf Theorem \ref{thm2}}, we need only show the following formulas
\begin{eqnarray}
d\big(n,\lfloor\frac{n}{2}\rfloor\big)&=&(n\bmod  2)+\sum\limits_{i=1}^{r}(-1)^{r-i}s_{i},\label{eq3}\\
d(n,k)&>&(n \bmod  2)+\sum\limits_{i=1}^{r}(-1)^{r-i}s_{i}\ \big(0\leqslant k<\lfloor\frac{n}{2}\rfloor\big)\label{eq4}.
\end{eqnarray}

We begin with the proof of the formula (\ref{eq3}).

If $r=0$, then $n(j)\in \{0,2\}$ for $0\leqslant j\leqslant m$, thus $\lfloor\frac{n}{3^{i}}\rfloor=2\lfloor\frac{\lfloor\frac{n}{2}\rfloor}{3^{i}}\rfloor$ for $1\leqslant i\leqslant m$.
Hence by the formula (\ref{eqn4}), we obtain $d\big(n,\lfloor\frac{n}{2}\rfloor\big)=0$, thus the formula (\ref{eq3}) holds.

From now on, we suppose $r\geqslant 1$. Write $\lfloor\frac{n}{2}\rfloor=\sum\limits_{i=0}^me(i)3^i$,
with $e(i)\in \{0,1,2\}$, and we distinguish two cases below, according to the parity of $r$.
\vskip 5pt

{\bf Case 1:} $r$ is even. Set $s_0=-1$. By the definition of $s_j\ (1\leqslant j\leqslant r)$, we have
\begin{eqnarray*}
n&=&\sum_{i=s_r+1}^mn(i)3^i+\sum_{j=0}^{\frac{r}{2}-1}\Big(\sum_{i=s_{r-2j-1}}^{s_{r-2j}}n(i)3^i+\sum_{i=s_{r-2j-2}+1}^{s_{r-2j-1}-1}n(i)3^i\Big)\\
 &=&\sum_{i=s_r+1}^mn(i)3^i+\sum_{j=0}^{\frac{r}{2}-1}\Big(3^{s_{r-2j}}+\sum_{i=s_{r-2j-1}+1}^{s_{r-2j}-1}n(i)3^i+3^{s_{r-2j-1}}\\
 &&+\sum_{i=s_{r-2j-2}+1}^{s_{r-2j-1}-1}n(i)3^i\Big)\\
 &=&\sum_{i=s_r+1}^mn(i)3^i+\sum_{j=0}^{\frac{r}{2}-1}\Big(0\cdot 3^{s_{r-2j}}+\sum_{i=s_{r-2j-1}+1}^{s_{r-2j}-1}(2+n(i))3^i\\
 &&+4\cdot 3^{s_{r-2j-1}}+\sum_{i=s_{r-2j-2}+1}^{s_{r-2j-1}-1}n(i)3^i\Big),
\end{eqnarray*}
from which we deduce directly, for $1\leqslant i\leqslant m$,
\begin{eqnarray}
e(i)&=&\left\{\begin{array}{cl}
\frac{n(i)}{2}, &\textrm{for}\ s_{r-2j-2}< i<s_{r-2j-1}\ (0\leqslant j<\frac{r}{2}),\\
2,&\textrm{for}\ i=s_{r-2j-1}\ (0\leqslant j<\frac{r}{2}), \\
1+\frac{n(i)}{2}, &\textrm{for}\ s_{r-2j-1}< i<s_{r-2j}\ (0\leqslant j<\frac{r}{2}),\\
0,&\textrm{for}\ i=s_{r-2j}\ (0\leqslant j<\frac{r}{2}), \\
\frac{n(i)}{2}, &\textrm{for}\ i>s_r,
\end{array}
\right.\label{eqn7}\\
\alpha_i&=&\left\{\begin{array}{cl}
0, &\textrm{for}\ s_{r-2j-2}<  i\leqslant s_{r-2j-1}\ (0\leqslant j<\frac{r}{2}),\\
1, &\textrm{for}\ s_{r-2j-1}< i\leqslant s_{r-2j}\ (0\leqslant j<\frac{r}{2}),\\
0, &\textrm{for}\ i>s_r.
\end{array}
\right.\label{eqn8}
\end{eqnarray}
where $\alpha_i:=\lfloor\frac{n}{3^{i}}\rfloor-2\lfloor\frac{\lfloor n/2\rfloor}{3^{i}}\rfloor=\sum\limits_{u=i}^{m}n(u)3^{u-i}-2\sum\limits_{u=i}^{m}e(u)3^{u-i}$.

Consequently we obtain
\begin{eqnarray*}
d\big(n,\lfloor\frac{n}{2}\rfloor\big)&=&(n-2\lfloor\frac{n}{2}\rfloor)+\sum\limits_{i=1}^{m}\alpha_i\\
&=&(n \bmod 2)+\sum\limits_{j=0}^{\frac{r}{2}-1}\sum\limits_{i=s_{r-2j-1}+1}^{s_{r-2j}}1\\
&=&(n \bmod 2)+\sum\limits_{j=0}^{\frac{r}{2}-1}(s_{r-2j}-s_{r-2j-1})\\
&=&(n \bmod 2)+\sum\limits_{i=1}^{r}(-1)^{r-i}s_{i}.
\end{eqnarray*}

{\bf Case 2:} $r$ is an odd. Set $s_0=0$. As above, we have
\begin{eqnarray*}
n&=&\sum_{i=s_r+1}^mn(i)3^i+\sum_{j=0}^{\frac{r-1}{2}-1}\Big(\sum_{i=s_{r-2j-1}}^{s_{r-2j}}n(i)3^i+\sum_{i=s_{r-2j-2}+1}^{s_{r-2j-1}-1}n(i)3^i\Big)
+\sum_{i=0}^{s_{1}}n(i)3^i\\
 &=&\sum_{i=s_r+1}^mn(i)3^i+\sum_{j=0}^{\frac{r-1}{2}-1}\Big(3^{s_{r-2j}}+\sum_{i=s_{r-2j-1}+1}^{s_{r-2j}-1}n(i)3^i+3^{s_{r-2j-1}}\\
 &&+\sum_{i=s_{r-2j-2}+1}^{s_{r-2j-1}-1}n(i)3^i\Big)+\Big(n(s_1)3^{s_1}+\sum_{i=0}^{s_{1}-1}n(i)3^i\Big)\\
 &=&\sum_{i=s_r+1}^mn(i)3^i+\sum_{j=0}^{\frac{r-1}{2}-1}\Big(0\cdot 3^{s_{r-2j}}+\sum_{i=s_{r-2j-1}+1}^{s_{r-2j}-1}(2+n(i))3^i\\
 &&+4\cdot 3^{s_{r-2j-1}}+\sum_{i=s_{r-2j-2}+1}^{s_{r-2j-1}-1}n(i)3^i\Big)+\Big(3^{s_1}+\sum_{i=0}^{s_{1}-1}n(i)3^i\Big),
\end{eqnarray*}
where the last term equals $1$ if $s_1=0$, and $0\cdot 3^{s_1}+\sum\limits_{i=1}^{s_{1}-1}(2+n(i))3^i+(3+n(0))$ if $s_1>0$,
from which we obtain at once, for $1\leqslant i\leqslant m$,
\begin{eqnarray}
e(i)&=&\left\{\begin{array}{cl}
\frac{n(i)}{2}, &\textrm{for}\ s_{r-2j-2}< i<s_{r-2j-1}\ (0\leqslant j<\frac{r-1}{2}),\\
2,&\textrm{for}\ i=s_{r-2j-1}\ (0\leqslant j< \frac{r-1}{2}), \\
1+\frac{n(i)}{2}, &\textrm{for}\ s_{r-2j-1}< i<s_{r-2j}\ (0\leqslant j\leqslant \frac{r-1}{2}),\\
0,&\textrm{for}\ i=s_{r-2j}\ (0\leqslant j\leqslant \frac{r-1}{2}), \\
\frac{n(i)}{2}, &\textrm{for}\ i>s_r.
\end{array}
\right.\label{eqn9}\\
\alpha_i&=&\left\{\begin{array}{cl}
0, &\textrm{for}\ s_{r-2j-2}<  i\leqslant s_{r-2j-1}\ (0\leqslant j<\frac{r-1}{2}),\\
1, &\textrm{for}\ s_{r-2j-1}< i\leqslant s_{r-2j}\ (0\leqslant j\leqslant \frac{r-1}{2}),\\
0, &\textrm{for}\ i>s_r,
\end{array}
\right.\label{eqn10}
\end{eqnarray}
and we proceed as in the {\bf Case 1} to obtain the desired equality (\ref{eq3}).
\vskip 5pt

Now we show the formula (\ref{eq4}).

Fix $k\ (0\leqslant k<\lfloor\frac{n}{2}\rfloor)$ an integer, and write
$k=\sum\limits_{i=0}^mk(i)3^i$, with $k(i)\in \{0,1,2\}$. Let $t$ be the largest index $i\ (0\leqslant i\leqslant m)$  such that $k(i)\neq e(i)$.
Then $k(t)\leqslant e(t)-1$ and $k(i)=e(i)$ for $t<i\leqslant m$. Put $s_{-1}=s_0=0$, and $s_{r+1}=+\infty$. Then
\begin{eqnarray}\label{eqn11}
n-2k&=&(n \bmod 2)+\big(2\lfloor\frac{n}{2}\rfloor-2k\big)\\
&=&(n \bmod 2)+2\sum\limits_{i=0}^{t}\big(e(i)-k(i)\big)3^{i}\nonumber\\
&\geqslant &(n \bmod 2)+2\cdot 3^t+2\sum\limits_{i=0}^{t-1}\big(e(i)-2\big)3^{i}\nonumber\\
&=&(n \bmod 2)+2+2\sum\limits_{i=0}^{t-1}e(i)3^{i}.\nonumber
\end{eqnarray}

Now we distinguish two cases below according to the property of $t$.
\vskip 5pt

{\bf Case 1}: $s_{r-2j}\leqslant t<s_{r-2j+1}$ for some integers $j\ (0\leqslant j\leqslant \lfloor\frac{r}{2}\rfloor)$. So $\alpha_{t+1}=0$,
thus
$\sum\limits_{u=t+1}^{m}n(u)3^{u}=2\sum\limits_{u=t+1}^{m}e(u)3^{u}$,
from which we obtain, by the definition of $t$,
\begin{eqnarray*}
0\leqslant n-2k=\sum_{i=0}^mn(i)3^i-2\sum_{i=0}^mk(i)3^i=\sum_{i=0}^tn(i)3^i-2\sum_{i=0}^tk(i)3^i<3^{t+1}.
\end{eqnarray*}
Hence $\lfloor\frac{n-2k}{3^{i}}\rfloor=0$ for all integers $i>t$, and by the formulas (\ref{eqn8}) and (\ref{eqn10}),
\begin{eqnarray}\label{eqn12}
&&\sum\limits_{i=t+1}^{m}\Big(\lfloor\frac{n}{3^{i}}\rfloor-\lfloor\frac{n-2k}{3^{i}}\rfloor
-2\lfloor\frac{k}{3^{i}}\rfloor\Big)
=\sum\limits_{i=t+1}^{m}\alpha_i\geqslant \sum\limits_{i=s_{r-2j+1}}^{m}\alpha_i\\
&\geqslant &\sum_{u=0}^{j-1}\sum_{i=s_{r-2u-1}+1}^{s_{r-2u}}\alpha_i
= \sum\limits_{u=0}^{j-1}(s_{r-2u}-s_{r-2u-1}).\nonumber
\end{eqnarray}
By the formulas (\ref{eqn7}) and (\ref{eqn9}), we know that for $s_{r-2u-1}\leqslant i< s_{r-2u}\ (0\leqslant u\leqslant \lfloor\frac{r}{2}\rfloor)$, we have $e(i)\geqslant 1$.
Consequently we obtain
\begin{eqnarray}\label{eqn13}
\sum\limits_{i=0}^{t-1}e(i)\geqslant \sum_{u=j}^{\lfloor\frac{r}{2}\rfloor}\sum\limits_{i=s_{r-2u-1}}^{s_{r-2u}-1}e(i)
\geqslant  \sum_{u=j}^{\lfloor\frac{r}{2}\rfloor}(s_{r-2u}-s_{r-2u-1}).
\end{eqnarray}
By Combining the formulas (\ref{eqn11}), (\ref{eqn13}), and (\ref{eqn12}), we obtain
\begin{eqnarray*}
d(n,k)&=&(n-2k)+\sum\limits_{i=1}^{m}\Big(\lfloor\frac{n}{3^{i}}\rfloor-\lfloor\frac{n-2k}{3^{i}}\rfloor-2\lfloor\frac{k}{3^{i}}\rfloor\Big)\\
&>&(n \bmod 2)+\sum_{u=j}^{\lfloor\frac{r}{2}\rfloor}(s_{r-2u}-s_{r-2u-1})+
\sum\limits_{u=0}^{j-1}(s_{r-2u}-s_{r-2u-1})\\
&=&(n \bmod 2)+\sum\limits_{i=1}^{r}(-1)^{r-i}s_{i}.
\end{eqnarray*}

{\bf Case 2}:  $s_{r-2j-1}\leqslant t< s_{r-2j}$ for some integers $j\ (0\leqslant j\leqslant \lfloor\frac{r}{2}\rfloor)$.
Then $\alpha_{t+1}=1$, thus
$\sum\limits_{u=t+1}^{m}n(u)3^{u}=3^{t+1}+2\sum\limits_{u=t+1}^{m}e(u)3^{u}$,
and by the definition of $t$, we obtain
\begin{eqnarray*}
0\leqslant n-2k=\sum_{i=0}^mn(u)3^u-2\sum_{i=0}^mk(i)3^i=3^{t+1}\sum_{i=0}^tn(u)3^u-2\sum_{i=0}^tk(i)3^i<2\cdot 3^{t+1}.
\end{eqnarray*}
Hence $0\leqslant \lfloor\frac{n-2k}{3^{t+1}}\rfloor\leqslant 1$, and $\lfloor\frac{n-2k}{3^{i}}\rfloor=0$ for all integers $i>t+1$,
from which, we deduce, by the formulas (\ref{eqn8}) and (\ref{eqn10}),
\begin{eqnarray}\label{eqn14}
&&\sum\limits_{i=t+1}^{m}\Big(\lfloor\frac{n}{3^{i}}\rfloor-\lfloor\frac{n-2k}{3^{i}}\rfloor
-2\lfloor\frac{k}{3^{i}}\rfloor\Big)
\geqslant -1+\sum\limits_{i=t+1}^{m}\alpha_i\\
&\geqslant &-1+\sum\limits_{i=t+1}^{s_{r-2j}}\alpha_i+\sum_{u=0}^{j-1}\sum_{i=s_{r-2u-1}+1}^{s_{r-2u}}\alpha_i\nonumber\\
&=&-1+(s_{r-2j}-t)+\sum\limits_{u=0}^{j-1}(s_{r-2u}-s_{r-2u-1}).\nonumber
\end{eqnarray}
By the formulas (\ref{eqn7}) and (\ref{eqn9}), we know that for $s_{r-2u-1}\leqslant i < s_{r-2u}\ (0\leqslant u\leqslant \lfloor\frac{r}{2}\rfloor)$, we have $e(i)\geqslant 1$.
Consequently we obtain
\begin{eqnarray}\label{eqn15}
\sum\limits_{i=0}^{t-1}e(i)&\geqslant&\sum\limits_{i=s_{r-2j-1}}^{t-1}e(i)+ \sum_{u=j+1}^{\lfloor\frac{r}{2}\rfloor}\sum\limits_{i=s_{r-2u-1}}^{s_{r-2u}-1}e(i)\\
&\geqslant &(t-s_{r-2j-1})+  \sum_{u=j+1}^{\lfloor\frac{r}{2}\rfloor}(s_{r-2u}-s_{r-2u-1}).\nonumber
\end{eqnarray}
By Combining the formulas (\ref{eqn11}), (\ref{eqn15}), and (\ref{eqn14}), we obtain
\begin{eqnarray*}
d(n,k)&=&(n-2k)+\sum\limits_{i=1}^{m}\Big(\lfloor\frac{n}{3^{i}}\rfloor-\lfloor\frac{n-2k}{3^{i}}\rfloor-2\lfloor\frac{k}{3^{i}}\rfloor\Big)\\
&\geqslant &(n \bmod 2)+2+(t-s_{r-2j-1})+\sum_{u=j+1}^{\lfloor\frac{r}{2}\rfloor}(s_{r-2u}-s_{r-2u-1})\\
&&-1+(s_{r-2j}-t)+\sum\limits_{u=0}^{j-1}(s_{r-2u}-s_{r-2u-1})\\
&>&(n \bmod 2)+\sum\limits_{i=1}^{r}(-1)^{r-i}s_{i}.
\end{eqnarray*}
Hence the formula (\ref{eq6}) holds, and this finished the proof of {\bf Theorem \ref{thm2}}.
\end{proof}

As application of {\bf Theorem \ref{thm2}}, we obtain at once the following result.

\begin{theorem}\label{thm3}
For all integers $n\geqslant 0$, we have
\begin{equation}
b(n)=\left\{\begin{array}{lcl}
&b\big(\lfloor\frac{n}{3}\rfloor\big)+ \big(\lfloor\frac{n}{3}\rfloor \bmod 2\big), &\text{if } n\equiv 0,2 \ (\bmod\, 3),\\
&b\big(\lfloor\frac{n}{3}\rfloor\big)+1-\big(\lfloor\frac{n}{3}\rfloor \bmod 2\big), &\text{if } n\equiv 1 \ (\bmod\, 3).
\end{array}\right.
\end{equation}
\end{theorem}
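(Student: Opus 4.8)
The plan is to read the recursion directly off the closed formula of \textbf{Theorem \ref{thm2}}, comparing the base-$3$ expansion of $n$ with that of $\lfloor n/3\rfloor$. First I would note that for $n=0$ the asserted identity is $b(0)=b(0)+0$, which holds since $b(0)=v_3(a(0))=v_3(1)=0$, and that the conventions of \textbf{Theorem \ref{thm2}} make its formula remain valid at the argument $0$ (no digit equals $1$, so $r=0$ and the formula yields $0$); hence for every $n\geqslant 1$ I may apply \textbf{Theorem \ref{thm2}} both to $n$ and to $m:=\lfloor n/3\rfloor$ without worrying whether $m$ vanishes. Writing $n=3m+c$ with $c=n\bmod 3\in\{0,1,2\}$ and $m=\sum_{j\geqslant 0}m(j)3^j$ in base $3$, the ternary digits of $n$ are $n(0)=c$ and $n(j)=m(j-1)$ for $j\geqslant 1$. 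Letting $t_1<\cdots<t_q$ be the positions $j$ with $m(j)=1$, \textbf{Theorem \ref{thm2}} applied to $m$ reads
\begin{equation*}
b(m)=(m\bmod 2)+\sum_{i=1}^{q}(-1)^{q-i}t_i=\sum_{i=1}^{q}(-1)^{q-i}(t_i+1),
\end{equation*}
and the Remark supplies $m\equiv q\ (\bmod\,2)$ together with $\sum_{i=1}^{q}(-1)^{q-i}=(q\bmod 2)$.

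Next I would treat the two residue classes separately. If $c\in\{0,2\}$, i.e.\ $n\equiv 0,2\ (\bmod\,3)$, then $n(0)\neq 1$, so the positions of the digit $1$ in $n$ are exactly $t_1+1<\cdots<t_q+1$; thus $r=q$ and $s_i=t_i+1$. Feeding this into the identity $b(n)=\sum_{i=1}^{r}(-1)^{r-i}(s_i+1)$ of \textbf{Theorem \ref{thm2}} and splitting off the constant,
\begin{equation*}
b(n)=\sum_{i=1}^{q}(-1)^{q-i}(t_i+2)=\sum_{i=1}^{q}(-1)^{q-i}(t_i+1)+\sum_{i=1}^{q}(-1)^{q-i}=b(m)+(q\bmod 2),
\end{equation*}
and since $q\equiv m\ (\bmod\,2)$ this equals $b(m)+(m\bmod 2)$, the first case.

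If $c=1$, i.e.\ $n\equiv 1\ (\bmod\,3)$, then the positions of the digit $1$ in $n$ are $0<t_1+1<\cdots<t_q+1$, so $r=q+1$ with $s_1=0$ and $s_{i+1}=t_i+1$ for $1\leqslant i\leqslant q$. Now I would use the identity $b(n)=(n\bmod 2)+\sum_{i=1}^{r}(-1)^{r-i}s_i$ of \textbf{Theorem \ref{thm2}}: the $i=1$ term vanishes because $s_1=0$, and re-indexing the remaining terms leaves
\begin{equation*}
b(n)=(n\bmod 2)+\sum_{i=1}^{q}(-1)^{q-i}(t_i+1)=(n\bmod 2)+b(m),
\end{equation*}
the last equality being the identity $\sum_{i=1}^{q}(-1)^{q-i}(t_i+1)=b(m)$ noted above. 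Since $n=3m+1$ gives $n\bmod 2=1-(m\bmod 2)$, this is $b(m)+1-(m\bmod 2)$, the second case, which finishes the proof.

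The only point requiring care is the precise matching of the ternary digit positions of $n$ with those of $\lfloor n/3\rfloor$ and the accompanying parity identities, all of which are furnished by the Remark; the remaining steps are the elementary sum manipulations displayed above, and I anticipate no genuine obstacle — this is, as the paper says, an immediate corollary of \textbf{Theorem \ref{thm2}}.
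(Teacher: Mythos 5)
Your proposal is correct and follows essentially the same route as the paper: apply the closed formula of Theorem \ref{thm2} to both $n$ and $\lfloor n/3\rfloor$, observe that the digit-$1$ positions shift by exactly one, and use the parity congruences from the Remark to convert between $(n\bmod 2)$, $(\lfloor n/3\rfloor\bmod 2)$, and the number of such positions. The only cosmetic difference is that you index by the digits of $\lfloor n/3\rfloor$ while the paper indexes by those of $n$.
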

\begin{proof} Fix $n\geqslant 1$ an integer, and write $n=\sum\limits_{j=0}^mn(j)3^j$, with $m=\lfloor \frac{\log n}{\log 3}\rfloor$, and  $n(j)\in \{0,1,2\}\ (0\leqslant j\leqslant m)$.
Let $s_{1}<s_{2}<\cdots <s_{r}$ be all the indices $j$ (if there exist) such that $n(j)=1$. We put $r=0$ if $n(j)\neq 1$ for all integers $j\ (0\leqslant j\leqslant m)$.

If $r=0$, then $n(j)\equiv 0\ (\bmod 2)$ for $0\leqslant j\leqslant m$, hence $n\equiv \lfloor \frac{n}{3}\rfloor \equiv 0\ (\bmod 2)$, and by {\bf Theorem \ref{thm2}},
we obtain $b(n)=0=b\big(\lfloor \frac{n}{3}\rfloor\big)+\big(\lfloor \frac{n}{3}\rfloor \bmod 2\big)$.

From now on, we suppose $r\geqslant 1$, and distinguish two cases below.

{\bf Case 1:} $n\equiv 0,2 \ (\bmod\, 3)$. Then $s_{1}>0$, and $n\equiv \lfloor\frac{n}{3}\rfloor\ (\bmod\, 2)$. Hence by {\bf Theorem \ref{thm2}}, we obtain
\begin{eqnarray*}
b(n)&=&(n\bmod 2)+\sum\limits_{i=1}^{r}(-1)^{r-i}s_{i}=\big(\lfloor\frac{n}{3}\rfloor\bmod 2\big)+\sum\limits_{i=1}^{r}(-1)^{r-i}s_{i},\\
b\big(\lfloor\frac{n}{3}\rfloor\big)&=&\sum\limits_{i=1}^{r}(-1)^{r-i}\big((s_{i}-1)+1\big)=\sum\limits_{i=1}^{r}(-1)^{r-i}s_{i},
\end{eqnarray*}
from which we deduce at once $b(n)=b\big(\lfloor\frac{n}{3}\rfloor\big)+\big(\lfloor\frac{n}{3}\rfloor\bmod 2\big)$.

{\bf Case 2:} $ n\equiv 1 \ (\bmod\, 3)$. Then $s_{1}=0$, and $n\equiv 1+\lfloor\frac{n}{3}\rfloor\ (\bmod\, 2)$. Hence by {\bf Theorem \ref{thm2}}, we have
\begin{eqnarray*}
b(n)&=&(n\bmod 2)+\sum\limits_{i=1}^{r}(-1)^{r-i}s_{i}=1-\big(\lfloor\frac{n}{3}\rfloor\bmod 2\big)+\sum\limits_{i=1}^{r}(-1)^{r-i}s_{i},\\
b\big(\lfloor\frac{n}{3}\rfloor\big)&=&\sum\limits_{i=2}^{r}(-1)^{r-i}((s_{i}-1)+1)=\sum\limits_{i=1}^{r}(-1)^{r-i}s_{i}.
\end{eqnarray*}
from which we obtain $b(n)=b\big(\lfloor\frac{n}{3}\rfloor\big)+1-\big(\lfloor\frac{n}{3}\rfloor\bmod 2\big)$.
\end{proof}

\subsection{Proof of Theorem \ref{thm1}}
Now we are ready to establish {\bf Theorem \ref{thm1}}. By {\bf Theorem~\ref{thm3}}, we need only consider the case that $n\equiv 1 \ (\bmod\, 3)$. Write $n=3k+1$, with $k\geqslant 0$ an integer.
We distinguish two cases below.

{\bf Case 1:} $k\equiv 0,2 \ (\bmod\, 3)$. Then by {\bf Theorem \ref{thm3}}, we have
\begin{eqnarray*}
b(n)&=&b(k)+1-(k \bmod 2\big)\\
    &=&b\big(\lfloor\frac{k}{3}\rfloor\big)+ \big(\lfloor\frac{k}{3}\rfloor \bmod 2\big)+1-(k \bmod 2\big)\\
    &=&b\big(\lfloor\frac{n}{9}\rfloor\big)+1,
\end{eqnarray*}
for we have $k\equiv 3\lfloor\frac{k}{3}\rfloor \equiv \lfloor\frac{k}{3}\rfloor \, (\bmod 2)$.

{\bf Case 2:} $k\equiv 1 \ (\bmod\, 3)$. Then by {\bf Theorem \ref{thm3}}, we have
\begin{eqnarray*}
b(n)&=&b(k)+1-(k \bmod 2\big)\\
    &=&b\big(\lfloor\frac{k}{3}\rfloor\big)+1- \big(\lfloor\frac{k}{3}\rfloor \bmod 2\big)+1-(k \bmod 2\big)\\
    &=&b\big(\lfloor\frac{n}{9}\rfloor\big)+1,
\end{eqnarray*}
for we have $k=3\lfloor\frac{k}{3}\rfloor +1\equiv \lfloor\frac{k}{3}\rfloor+1\, (\bmod\, 2)$.

\subsection{$p$-regular sequences}

Let $p\geqslant 2$ be an integer. According to J.-P. Allouche and J. Shallit \cite{as1} (see also \cite{as3}), a sequence $(u(n))_{n\geqslant 0}$ with values in $\mathbb{Z}$ is $p$-regular if the $\mathbb{Z}$-module generated by the following set
$$
K_p(u)=\Big\{\big(u(p^bn+a)\big)_{n\geqslant 0}\ |\ b\geqslant 0, 0\leqslant a<p^b\Big\}
$$
is a finitely generated $\mathbb{Z}$-module.

As corollary of {\bf Theorem \ref{thm3}}, we have the following result.

\begin{theorem}\label{thm4}
The sequence $(b(n))_{n\geqslant 0}$ is $3$-regular.
\end{theorem}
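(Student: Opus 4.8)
The plan is to exhibit an explicit finite generating set for the $\mathbb{Z}$-module spanned by $K_3(b)$ by leveraging the recursion in Theorem~\ref{thm3}. First I would observe that Theorem~\ref{thm3} lets us express $b(3n)$, $b(3n+1)$, $b(3n+2)$ in terms of $b(n)$ together with the ``parity of $\lfloor n/3\rfloor$'' correction term; iterating, one more step (as in the proof of Theorem~\ref{thm1}) re-expresses things in terms of $b(\lfloor n/9\rfloor)$. The key point is that the correction terms are all of the form $c(n):=(n\bmod 2)$ or constants, and $(n\bmod 2)$ itself satisfies a simple $3$-subdivision relation because $3n+a\equiv n+a\pmod 2$. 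So the natural candidate module is $M=\mathbb{Z}\cdot(b(n))_{n\geqslant 0}+\mathbb{Z}\cdot\big((n\bmod 2)\big)_{n\geqslant 0}+\mathbb{Z}\cdot(1)_{n\geqslant 0}$, a rank-at-most-$3$ module, and I would prove $M$ is stable under all the subsequence operators $(u(n))\mapsto(u(3n+a))$ for $a\in\{0,1,2\}$.

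Concretely, the steps are: (1) record that $(1)_{n\geqslant 0}$ and the constant-$0$ sequence are in $M$ and are fixed by every $n\mapsto 3n+a$; (2) show $\big((3n+a)\bmod 2\big)_{n\geqslant 0}$ equals $(n\bmod 2)_{n\geqslant 0}$ if $a$ is even and $(1)_{n\geqslant 0}-(n\bmod 2)_{n\geqslant 0}$ if $a$ is odd, so the ``parity'' generator is preserved; (3) use Theorem~\ref{thm3} to write, for each $a\in\{0,1,2\}$, the sequence $\big(b(3n+a)\big)_{n\geqslant 0}$ as $\big(b(n)\big)_{n\geqslant 0}$ plus a $\mathbb{Z}$-combination of the parity sequence and the constant sequence — e.g. $b(3n)=b(n)+(n\bmod 2)$, $b(3n+2)=b(n)+(n\bmod 2)$, while for $b(3n+1)$ one applies the $n\equiv 1$ case of Theorem~\ref{thm3} and then splits again according to $n\bmod 3$ to stay inside $M$ (this is exactly the computation carried out in the proof of Theorem~\ref{thm1}, which I may cite). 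Then by induction on $b$, every element of $K_3(b)$ lies in $M$, hence the module generated by $K_3(b)$ is contained in the finitely generated module $M$; since $\mathbb{Z}$ is Noetherian, a submodule of a finitely generated $\mathbb{Z}$-module is finitely generated, so $(b(n))_{n\geqslant 0}$ is $3$-regular.

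The main obstacle — really the only delicate point — is handling $b(3n+1)$: the recursion for $n\equiv 1\pmod 3$ involves $1-(\lfloor n/3\rfloor\bmod 2)$ rather than $(n\bmod 2)$, so a single application of Theorem~\ref{thm3} does not immediately land in $M$; one must descend one further level (to $\lfloor n/9\rfloor$) and use that $\lfloor n/3\rfloor\equiv n\pmod 2$ or $\equiv n+1\pmod 2$ according to $n\bmod 3$, which reintroduces a $(n\bmod 2)$ term up to an additive constant. I would organize this as a small case analysis on $n\bmod 9$ for the residue class $n\equiv 1\pmod 3$, exactly mirroring Cases~1 and~2 in the proof of Theorem~\ref{thm1}, and check in each of the (at most) three sub-cases that the resulting sequence is $\big(b(\lfloor n/9\rfloor)\big)$-plus-a-known-combination, which again lies in $M$ after noting that $(\lfloor n/9\rfloor\bmod 2)_{n\geqslant 0}$ is itself a $\mathbb{Z}$-combination of the generators of $M$ restricted along $n\mapsto 9n+a'$. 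Everything else is bookkeeping with the three generators $1$, $n\bmod 2$, and $b(n)$.
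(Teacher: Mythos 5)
Your overall strategy is exactly the paper's: the paper packages the three generators into a vector $V(n)=\bigl(b(n),\,1,\,n\bmod 2\bigr)^{T}$, checks from Theorem~\ref{thm3} that $V(3n+k)=\mu(k)V(n)$ for explicit integer matrices $\mu(0),\mu(1),\mu(2)$, and invokes the linear-representation criterion for $k$-regularity; this is precisely your rank-$3$ module $M$ together with closure under the three decimation operators. Steps (1) and (2) of your plan and the concluding Noetherian argument are all fine.

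The one thing to fix is that your ``main obstacle'' does not exist, and the detour you propose to get around it is the only shaky part of the write-up. For the operator $n\mapsto 3n+1$, Theorem~\ref{thm3} applied to $N=3n+1$ gives $\lfloor N/3\rfloor=n$, hence
\begin{equation*}
b(3n+1)=b(n)+1-(n\bmod 2),
\end{equation*}
which is already a $\mathbb{Z}$-combination of your three generators: the term $1-(\lfloor N/3\rfloor\bmod 2)$ becomes $1-(n\bmod 2)$ in the new index variable, so no descent to $\lfloor n/9\rfloor$ and no case analysis mod $9$ is needed. Worse, the workaround as you describe it is not quite sound: showing that the three decimated subsequences $\bigl(b(9m+1)\bigr)$, $\bigl(b(9m+4)\bigr)$, $\bigl(b(9m+7)\bigr)$ each lie in $M$ does not by itself place the interleaved sequence $\bigl(b(3n+1)\bigr)_{n\geqslant 0}$ in $M$, and it is the latter that your induction on the kernel $K_3(b)$ requires (every $\bigl(b(3^cn+a)\bigr)$ is reached by composing the three \emph{basic} operators, so $M$ must be stable under each of them individually). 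Replace that paragraph by the one-line identity above and the proof closes cleanly, matching the paper's.
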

\begin{proof} For all integers $n\geqslant 0$, define
\begin{eqnarray*}
V(n)=\bigg(\begin{array}{ccc}b(n)\\
1\\
n\ \bmod 2
\end{array}\bigg).
\end{eqnarray*}
By {\bf Theorem \ref{thm3}}, we have $V(3n+k)=\mu(k)V(n)$, for $0\leqslant k\leqslant 2$, where
\begin{eqnarray*}
\mu(0)=\mu(2)=\bigg(\begin{array}{ccc} 1 &0 &1\\
0 &1 &0\\
0 &0 &1
\end{array}\bigg), \ \textrm{and}\
\mu(1)=\bigg(\begin{array}{ccc} 1 &1 &-1\\
0 &1 &0\\
0 &1 &-1
\end{array}\bigg).
\end{eqnarray*}
Thus the sequence $(b(n))_{n\geqslant 0}$ is $3$-regular, by {\bf Theorem 16.1.3} in \cite[p.~439]{as3}.
\end{proof}
\vskip 5pt

\textbf{Acknowledgments.} The author would like to thank heartily Professor Jia-Yan YAO
for interesting discussions and for his help in the preparation of the work. He would like also to thank warmly
Professor Jeffrey Shallit for informing him the background of the problem in discussion.
Finally he would like to thank the National Natural Science Foundation of China (Grants No. 11871295) for partial financial support.

\bigskip

\end{document}